\documentclass[12pt]{amsart}
\usepackage{a4wide}
\usepackage[utf8]{inputenc}

 \newtheorem{theorem}{Theorem}[section]
 \newtheorem{prop}[theorem]{Proposition}
 
 \newtheorem{lemma}[theorem]{Lemma}

 \theoremstyle{definition}
 \newtheorem{defi}[theorem]{Definition}
 \newtheorem{ex}[theorem]{Example}
 \newtheorem{rem}[theorem]{Remark}

\parskip=.3\baselineskip

\begin{document}

\title[Orthogonal coordinates]{Non-existence of orthogonal coordinates on the complex and quaternionic projective spaces}
\author{Paul Gauduchon and Andrei Moroianu}

\address{Paul Gauduchon \\ CMLS\\ {\'E}cole
  Polytechnique \\ CNRS, Universit\'e Paris-Saclay, 91128 Palaiseau \\ France}
\email{paul.gauduchon@polytechnique.edu}

\address{Andrei Moroianu, Laboratoire de Math\'ematiques d'Orsay, Univ.\ Paris-Sud, CNRS,
Universit\'e Paris-Saclay, 91405 Orsay, France}
\email{andrei.moroianu@math.cnrs.fr}

\subjclass[2010]{53B20, 53C35, 70H06}
\keywords{Orthogonal coordinates, separation of variables, Hamilton-Jacobi equation, symmetric spaces.}

\begin{abstract} DeTurck and Yang have shown that in the neighbourhood of every point of a $3$-dimensional Riemannian manifold, there exists a system of orthogonal coordinates (that is, whith respect to which the metric has diagonal form). We show that this property does not generalize to higher dimensions. In particular, the complex projective spaces  $\mathbb{CP}^m$ and the quaternionic projective spaces $\mathbb{HP}^q$, endowed with their canonical metrics, do not have local systems of orthogonal coordinates for $m,q\ge 2$.
\end{abstract}
\maketitle

\section{Introduction}

A Riemannian manifold is said to {\em admit orthogonal coordinates} if in the neighbourhood of each point there exists a system of coordinates in which the metric has diagonal form, cf. Definition \ref{defi-ortho}. 

Metrics admitting orthogonal coordinates naturally arise in the theory of orthogonal separable dynamical systems, related to the Hamilton-Jacobi equation, and have been considered by many authors starting with Paul Stäckel \cite{st} and Luther Pfahler Eisenhart \cite{eis}, Charles Boyer \cite{boyer}, Paul Tod \cite{tod}, and more recently, James D. E. Grant and James A. Vickers \cite{gv}, Sergio Benenti \cite{be1}, \cite{be2},  Konrad Schöbel \cite{sch}, and others.

Flat, or, more generally, locally conformally flat Riemannian manifolds (in particular every Riemannian surface) clearly admit orthogonal coordinates. In a beautiful paper published in 1984, Dennis M. DeTurck and Deane Yang \cite{deT-Y} showed that every Riemannian metric of dimension 3 has orthogonal coordinates. In the same paper, they also observe that the existence issue of orthgonal coordinates on Riemannian manifolds of dimension greater than $3$ becomes an overdetermined problem, and therefore one can hardly expect orthogonal coordinates on a generic Riemannian manifold.  On the other hand, the existence/non-existence issue of orthogonal coordinates on a given family of Riemanian manifolds has remained a quite interesting question, albeit largely unexplored.

The aim of this paper is to establish the non-existence of orthogonal coordinates on two classical families of Riemannian manifolds, namely the standard complex projective spaces $\mathbb{CP}^m$ for $m\ge 2$ and the standard quaternionic projective spaces $\mathbb{HP}^q$ for $q\ge 2$. The overall argument relies on some remarkable feature --- already noticed by DeTurck and Yang --- of the curvature of Riemannian manifolds admitting orthogonal coordinates, together with some additional specific arguments in dimension 4, for the complex projective plane $\mathbb{CP}^2$.

A list of open questions is proposed at the end of the paper.

{\sc Acknowledgment.} We are grateful to François Golse for having brought this question to our attention and to Charles Boyer and Paul Tod for communicating to us their previous works and further related references. We warmly thank David Johnson for having pointed out an error in the expression (\ref{R-ijkl}) of the curvature in a previous version of this paper.

\section{Riemannian metrics with orthogonal coordinates} \label{sortho}

Let $(M, g)$ be any Riemannian manifold of dimension $n$. Let $x_1, \ldots, x _n$ be any system of local coordinates defined on some open set $\mathcal{U}$ and denote by $\frac{\partial}{\partial x _1}, \ldots,  \frac{\partial}{\partial x _n}$ the corresponding frame on $\mathcal{U}$; the restriction  to $\mathcal{U}$ of the metric $g$ is then of the form:
\begin{equation} g = \sum _{i, j = 1} ^n g _{i j} d x _i \otimes d x _j, \end{equation}
by setting $g _{ij} := g (\frac{\partial}{\partial x _i}, \frac{\partial}{\partial x _j})$. 

\begin{defi} \label{defi-ortho} The system of coordinates $x_1, \ldots, x _n$ is called {\em orthogonal}, if $g _{i j} = 0$ whenever $i \neq j$, hence if $g$ is of the form
\begin{equation} \label{ortho} g = \sum _{j = 1} ^n a _j ^2 \, d x _j \otimes d x _j, \end{equation}
for some positive functions $a _1, \ldots, a _n$.
We say that a Riemannian manifold $(M,g)$ {\it has orthogonal coordinates}  if every point of $M$ has a neighbourhood on which there exists a system of orthogonal coordinates.
\end{defi}
\begin{rem} \label{rem-b} If a system of orthogonal coordinates $x_1, \ldots, x _n$ exists, any system of coordinates $y_1, \ldots, y _n$ of the form $y _i = \varphi _i (x_i)$, where $\varphi _i$ is a real function whose derivative $\varphi '_i$ has no zero,  is orthogonal as well, since
      \begin{equation} g = \sum _{i = 1}^n b _i ^2 \, d y _i \otimes d y _i, \end{equation}
    with
    \begin{equation} \label{b} b _i = \frac{a _i}{|\varphi '_i (x _i)|}, \end{equation}
    for $i = 1, \ldots, n$.
    \end{rem}
    
\begin{ex} The standard flat metric $g _0$ on $M = \mathbb{R} ^n$ is of the form
    \begin{equation} \label{flat} g _0 = \sum _{i = 1} ^n d x _i \otimes d x _i, \end{equation}
    where the $x _i$'s denote the natural coordinates of $\mathbb{R} ^n$. Conversely, a Riemannian metric $g$ is flat whenever, in the neighbourhood of any point,  there exists a system of coordinates such that $g$ is of this form. \end{ex}

\begin{ex} \label{ex-sphere}  Denote by $\mathbb{S} ^n$ the $n$-dimensional standard unit sphere $$\mathbb{S} ^n = \{u = (u_0, \ldots, u _n) \, | \, \sum _{i = 0} ^n u _i ^2 = 1\},$$ and by $g _S$ the standard Riemannian metric of sectional curvature $1$, induced by the standard flat metric of $\mathbb{R} ^{n + 1}$. Denote by $N$ the point $(1, 0, \ldots, 0)$ of $\mathbb{S} ^n$ and by $\mathcal{U}$ the open set $\mathbb{S} ^n \setminus \{N\}$. Then, on $\mathcal{U}$, the metric $g _S$ is of the form:
    \begin{equation} g _S = \frac{4 \sum _{j = 1} ^n d x _j \otimes d x _j}{(1 + \sum _{j = 1} ^n x _j ^2) ^2}, \end{equation}
    by setting
    \begin{equation} x _j = \frac{u_j}{1 - u _0}, \qquad j = 1, \ldots, n. \end{equation}
      Conversely, any locally conformally flat metric, in particular, any Riemannian metric in dimension $2$,  can be locally written on the form
 \begin{equation} g = a ^2 \, \sum _{j = 1} ^n d x _j \otimes d x _j, \end{equation}
   i.e. on the form (\ref{ortho}), with $a _j = a$, $j = 1, \ldots, n$.
  \end{ex}

  \smallskip

Assume from now on that $(M, g)$ is a Riemannian manifold of dimension $n$, with $n \geq 4$. We assume that $x _1, \ldots, x_n$ is an orthogonal system of coordinates, as defined above, and we denote by $\{e_1, \ldots, e _n\}$ the {\em associated orthonormal frame}, with
\begin{equation} \label{e} e _j := a _j^{-1}\, \frac{\partial}{\partial x_j}, \qquad j = 1, \ldots, n. \end{equation}
Notice that this frame remains unchanged if the orthogonal system $x_1, \ldots, x _n$ is replaced by $y_1, \ldots, y _n$ as in Remark \ref{rem-b}. We denote by $\nabla$ the Levi-Civita connection of $g$ and by $R$ its curvature, defined by
\begin{equation} \label{R} R _{X, Y} Z = \nabla _{[X, Y]} Z - \nabla _X (\nabla _Y Z) + \nabla _Y (\nabla _X Z), \end{equation}
for any vector fields $X, Y, Z$ on $M$. 
\begin{prop} \label{prop-crux} Let $(M, g)$ be a Riemannian manifold of dimension $n \geq 4$, equipped with orthogonal coordinates on some open set $\mathcal{U}$, where the metric is of the form {\rm (\ref{ortho})}. Denote by $\{e_1, \ldots, e _n\}$ the associated orthonormal frame as defined above. Then,
 \begin{equation} \label{nabla-e} \nabla _{e _i} e _j = a _i ^{-1} d a _i (e _j) \, e _i - \delta_{i j} \, a _j ^{-1} (d a _j) ^{\sharp}, \quad i, j = 1, \ldots, n, \end{equation}
where $(d a _j) ^{\sharp}$ denotes the vector field dual to $d a _j$ with respect to $g$ and $\delta _{i j}$ the usual Kronecker symbol. Moreover,
\begin{equation} \label{R-ijkl} \begin{split} g (R _{e_i, e_j} e_k, e_{\ell}) & = \delta _{i \ell} \, a _i ^{-1} (\nabla _{e _j} d a _i) (e _k) - \delta _{j \ell} \, a _j ^{-1} (\nabla _{e _i} d a _j) (e _k) \\ & - \delta _{i k} \, a _k ^{-1} (\nabla _{e _j} d a _k) (e _{\ell}) + \delta _{j k} \, a _k ^{-1} (\nabla _{e _i} d a _k) (e _{\ell})\\ & + (\delta _{i k} \delta _{j \ell} - \delta _{j k} \delta _{i \ell})  \, a _i ^{-1} a _j ^{-1} \, g (d a _i, d a _j),  \end{split} \end{equation}
for any quadruple $i, j, k, \ell = 1, \ldots, n$.
In particular,  for any triple $i, j, k$ with $i \neq j \neq k\neq i$, we have:
\begin{equation} \label{R-ijk} R _{e _i, e _j} e _k = a _i ^{-1} (\nabla _{e_j} d a _i) (e_k) \, e _i - a _j ^{-1} (\nabla _{e_i} d a _j) (e_k) \, e _j, \end{equation}
and, as observed in \cite{deT-Y},  for quadruple $i, j, k,  \ell$ with $i,j,k,\ell$ mutually distinct:
\begin{equation} \label{R-2-ijkl} g (R _{e_i, e _j} e _k, e _{\ell}) = 0. \end{equation}
\end{prop}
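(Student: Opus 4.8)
The plan is to carry out all computations directly in the orthonormal frame $\{e_1,\dots,e_n\}$, using only the defining relation $\partial/\partial x_j=a_je_j$, rather than going through the Christoffel symbols of the coordinate frame.

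\emph{Step 1 (Lie brackets).} Expanding $[e_i,e_j]=[a_i^{-1}\,\partial/\partial x_i,\ a_j^{-1}\,\partial/\partial x_j]$ and re-expressing the $\partial/\partial x$'s in terms of the $e$'s gives, for all $i,j$,
\[ [e_i,e_j]=a_i^{-1}da_i(e_j)\,e_i-a_j^{-1}da_j(e_i)\,e_j ; \]
in particular $[e_i,e_j]$ lies in the span of $e_i,e_j$ and vanishes for $i=j$.

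\emph{Step 2 (connection).} Since $g(e_a,e_b)=\delta_{ab}$ is constant, the Koszul formula collapses to
\[ 2\,g(\nabla_{e_i}e_j,e_k)=g([e_i,e_j],e_k)-g([e_i,e_k],e_j)-g([e_j,e_k],e_i). \]
Substituting Step 1 and grouping the Kronecker symbols — the coefficient of $\delta_{jk}$ cancels, while those of $\delta_{ik}$ and $\delta_{ij}$ each double — yields $g(\nabla_{e_i}e_j,e_k)=\delta_{ik}\,a_i^{-1}da_i(e_j)-\delta_{ij}\,a_i^{-1}da_i(e_k)$. Summing against $e_k$ and recognizing $\sum_k da_i(e_k)\,e_k=(da_i)^{\sharp}$ gives (\ref{nabla-e}).

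\emph{Step 3 (curvature).} Insert (\ref{nabla-e}) and the bracket formula of Step 1 into the definition (\ref{R}) of $R_{e_i,e_j}e_k$. The terms carrying second derivatives of the $a$'s reassemble, via $(\nabla_{e_j}da_i)(e_k)=e_j\big(da_i(e_k)\big)-da_i(\nabla_{e_j}e_k)$, into the Hessian terms of (\ref{R-ijkl}); the remaining purely first-order terms, after one further use of (\ref{nabla-e}), combine into the $g(da_i,da_j)$ term. Pairing with $e_\ell$ and sorting by the various $\delta$'s then produces (\ref{R-ijkl}). It is prudent to check en route that the result has the symmetries of an algebraic curvature tensor (skew in $i,j$ and in $k,\ell$, and symmetric under $(ij)\leftrightarrow(k\ell)$), which relies on the symmetry $(\nabla_{e_j}da_i)(e_k)=(\nabla_{e_k}da_i)(e_j)$ of the Hessian. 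Finally, for mutually distinct $i,j,k$ every Kronecker symbol in (\ref{R-ijkl}) vanishes except the first two, leaving (\ref{R-ijk}); pairing (\ref{R-ijk}) with $e_\ell$ for $\ell\notin\{i,j,k\}$ gives (\ref{R-2-ijkl}).

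Steps 1, 2 and the two special cases are short; the real labour — and the only genuinely delicate point — is the bookkeeping in Step 3, where many terms must be combined and the Kronecker symbols tracked with care, the sign and coefficient of the $g(da_i,da_j)$ term being the easiest place to slip (and indeed the source of a misprint in an earlier version of this formula). Verifying the curvature-tensor symmetries, or testing the formula on the round sphere in stereographic coordinates, provides a useful safeguard.
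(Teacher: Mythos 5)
Your proposal is correct and takes essentially the same route as the paper: compute the brackets $[e_i,e_j]$, use the Koszul formula (which reduces to bracket terms in the orthonormal frame) to get \eqref{nabla-e}, then substitute into the definition \eqref{R} to obtain \eqref{R-ijkl}, with \eqref{R-ijk} and \eqref{R-2-ijkl} as immediate specializations. The paper is equally brief about the curvature bookkeeping (``a straightforward computation''), so your level of detail, including the Hessian regrouping and the symmetry/sphere sanity checks, is entirely adequate.
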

\begin{proof} For any $i, j$, we have $[e _i, e _j] = [a _i ^{-1} \frac{\partial}{\partial x _i}, a _j ^{-1} \frac{\partial}{\partial x _j}]$, hence
  \begin{equation} \label{bracket} [e _i, e _j] = a _i ^{-1} d a _i (e_j) \, e_i - a _j ^{-1} d a _j (e_i) \, e _j,  \end{equation}
  whereas the usual {\it Koszul formula} for the Levi-Civita connection is here reduced to
  \begin{equation} \label{koszul} 2 g (\nabla _{e_i} e _j, e _k) = g ([e_i, e_j], e_k) + g  ([e_k, e_i], e_j) + g  (e_i, [e_k, e_j]). \end{equation}
  We easily infer:
 \begin{equation} \label{ij} \begin{split} & \nabla _{e_i} e _j =  a _i ^{-1} d a _i (e _j) \, e _i, \qquad i \neq j, \\ & \nabla _{e _j} e _j  = - \sum _{i \neq j} a _j ^{-1} d a _j (e_i) \, e_i, \end{split} \end{equation}
  hence (\ref{nabla-e}). A straightforward computation then gives (\ref{R-ijkl}), and (\ref{R-ijk})--(\ref{R-2-ijkl})  follow readily.
\end{proof}
\begin{rem}\label{r2}
Equation \eqref{nabla-e} can equivalently be written as
 \begin{equation} \label{alp} \nabla _{e _i} e _j^\flat = e_j\lrcorner (\alpha_i\wedge e_i^\flat), \quad i, j = 1, \ldots, n, \end{equation}
where $\alpha_i:=a_i^{-1}da_i$. Conversely, a (local) orthonormal frame satisfying \eqref{alp} for some $1$-forms $\alpha_i$ is necessarily induced by a system of orthogonal coordinates. Indeed, using \eqref{alp} we can write
$$de_j^\flat=\sum_{i=1}^n e_i^\flat\wedge\nabla _{e _i} e _j^\flat=\sum_{i=1}^n e_i^\flat\wedge(\alpha_i( e _j)e_i^\flat-\delta_{ij}\alpha_i)=\alpha_i\wedge e_i^\flat,$$
whence $e_j^\flat\wedge de_j^\flat=0$ for every $j= 1, \ldots, n$. The Frobenius theorem shows that there exist functions $x_i$ and $b_i$  (defined on some smaller neighbourhood) such that $e_j^\flat=b_jdx_j$ for every $j= 1, \ldots, n$. Changing the sign of $x_j$ if necessary, one can assume that each $b_j$ is a positive function. Then $x_1,\ldots,x_n$ is a system of orthogonal coordinates with associated orthonomal frame $e_1,\ldots,e_n$.
\end{rem}

\section{The complex projective spaces} \label{sprojective} 

We now consider the complex projective space $M = \mathbb{CP} ^m$, $m \geq 2$, equipped with the Fubini-Study metric, $g_{FS}$, of constant holomorphic sectional curvature $c$, whose curvature, $R$, is given by:
\begin{equation} \label{FS} R^{FS} _{X, Y} Z  =  \frac{c}{4} \big(g_{FS} (X, Z) \, Y - g_{FS}(Y, Z) \, X + \omega (X, Z) \, JY - \omega (Y, Z) \, JX  + 2 \omega (X, Y) \, JZ\big)  \end{equation}
for any vector fields $X, Y, Z$, where $J$ denotes the complex structure of $\mathbb{CP} ^m$ and $\omega = g _{FS} (J \cdot, \cdot)$ its K\"ahler form.  For convenience and without loss of generality, we assume that $c = 4$. Our aim  is the show that $\mathbb{CP} ^m$, equipped with the Fubini-Study metric, admits no orthogonal system of coordinates.  Since the case when $m = 2$ requires a specific argument, see below Proposition \ref{prop-2}, we first show:
\begin{prop} \label{prop-m} For $m \geq 3$, the complex projective space $\mathbb{CP} ^m$, equipped with the standard Fubini-Study metric, admits no orthogonal system of coordinates. \end{prop}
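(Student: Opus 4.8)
The plan is to argue pointwise, using only the vanishing \eqref{R-2-ijkl} of the curvature components with four distinct indices. Suppose $\mathbb{CP}^m$ admitted orthogonal coordinates near some point, with associated orthonormal frame $\{e_1,\dots,e_n\}$, $n=2m$. Substituting the Fubini--Study curvature \eqref{FS} (with $c=4$) into \eqref{R-2-ijkl} and writing $\omega_{ij}:=\omega(e_i,e_j)$, every term carrying a factor $g(e_a,e_b)$ with $a\ne b$ drops out because $i,j,k,\ell$ are distinct, leaving
\[ \omega_{ik}\,\omega_{j\ell}-\omega_{jk}\,\omega_{i\ell}+2\,\omega_{ij}\,\omega_{k\ell}=0 \qquad(\star) \]
for every quadruple of mutually distinct indices. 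The matrix $\Omega:=(\omega_{ij})$ is skew-symmetric and satisfies $\Omega^2=-\mathrm{Id}$, being the matrix of the orthogonal complex structure $J$ in the basis $\{e_i\}$, so the whole statement reduces to the purely algebraic claim that, for $n=2m\ge 6$, no such $\Omega$ satisfies $(\star)$.

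First I would sharpen $(\star)$: applying it to the quadruples obtained from $(i,j,k,\ell)$ by transposing $j$ with $k$ and by transposing $k$ with $\ell$, and using the skew-symmetry of $\omega$, one gets a linear system in the three products $\omega_{ij}\omega_{k\ell},\ \omega_{ik}\omega_{j\ell},\ \omega_{i\ell}\omega_{jk}$ whose unique solution is $\omega_{ij}\omega_{k\ell}=\omega_{i\ell}\omega_{jk}=-\omega_{ik}\omega_{j\ell}$; in particular $\omega_{ik}\omega_{j\ell}+\omega_{i\ell}\omega_{jk}=0$ for all mutually distinct $i,j,k,\ell$. Fixing distinct $i,j$ and setting $u_k:=\omega_{ik}$, $v_k:=\omega_{jk}$ for $k\notin\{i,j\}$, this last identity says that the symmetric matrix $(u_kv_\ell+u_\ell v_k)_{k,\ell}$ is diagonal; since it has rank at most two, a short case analysis — using that the rows of $\Omega$ are unit vectors to treat the degenerate cases $u=0$ and $v=0$ — shows that the $i$-th row of $\Omega$ has at most two nonzero entries. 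This is the only point where $m\ge 3$ enters: to rule out a row of support size exactly three one picks $j$ outside that support, which requires $n\ge 6$. (For $n=4$ the relations $(\star)$ really do have solutions, namely the self-dual complex structures with $\omega_{12}^2=\omega_{13}^2=\omega_{14}^2=\tfrac13$, which is exactly why $\mathbb{CP}^2$ needs the separate treatment of Proposition \ref{prop-2}.)

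Next I would finish combinatorially. By skew-symmetry the columns of $\Omega$ also have at most two nonzero entries, so the support graph $G$ on $\{1,\dots,n\}$ — with an edge $\{i,j\}$ whenever $\omega_{ij}\ne 0$ — has all vertex degrees in $\{1,2\}$ and no isolated vertex (the rows of $\Omega$ being nonzero), hence is a disjoint union of paths and cycles. A path with at least two edges is impossible: a degree-one endpoint $i$ forces $\omega_{ij}=\pm 1$, and then the unit length of the row of its degree-two neighbour $j$ forces the entry towards $j$'s other neighbour to vanish, a contradiction. A cycle of length $r\ne 4$, written $v_1-v_2-v_3-\dots-v_r-v_1$, is impossible: $v_1$ and $v_3$ have $v_2$ as their unique common neighbour, so $(\Omega^2)_{v_1v_3}=\omega_{v_1v_2}\omega_{v_2v_3}\ne 0$, contradicting $\Omega^2=-\mathrm{Id}$. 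A $4$-cycle $v_1v_2v_3v_4$ is excluded by the sharpened relations: its non-edges give $\omega_{v_1v_3}=\omega_{v_2v_4}=0$, so $\omega_{v_1v_2}\omega_{v_3v_4}=-\omega_{v_1v_3}\omega_{v_2v_4}=0$, impossible since $\{v_1,v_2\}$ and $\{v_3,v_4\}$ are edges. Hence $G$ is a perfect matching, i.e.\ after reindexing $\Omega$ is block-diagonal with $m$ blocks $\begin{pmatrix}0&\varepsilon_p\\-\varepsilon_p&0\end{pmatrix}$, $\varepsilon_p=\pm 1$; but then, as $m\ge 2$, taking $i,j$ in the first block and $k,\ell$ in the second makes $(\star)$ read $2\,\varepsilon_1\varepsilon_2=0$, a contradiction. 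So no such $\Omega$ exists and $\mathbb{CP}^m$ has no orthogonal coordinates for $m\ge 3$.

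I expect the genuine obstacle to be the middle step — turning the monomial identities coming from $(\star)$ into the structural bound ``at most two nonzero entries per row of $\Omega$'' — since this is where the dimension hypothesis is really used, and one must be careful that the rank-two-diagonal argument controls the support of a whole row of $\Omega$, not merely the support of the diagonal of $(u_kv_\ell+u_\ell v_k)$.
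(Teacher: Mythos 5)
Your proposal is correct, but it takes a genuinely different route from the paper's. The paper's proof is a short dimension count: fixing $i,j$ with $\omega(e_i,e_j)\neq 0$, it combines \eqref{FS} with \eqref{R-ijk} (whose relevant content, $R_{e_i,e_j}e_k\in{\rm span}(e_i,e_j)$, already follows from \eqref{R-2-ijkl}) to solve explicitly for $e_k$, showing that every $e_k$ with $k\notin\{i,j\}$ must lie in the fixed $2$-plane spanned by $-\omega(e_i,e_j)e_j+Je_i$ and $\omega(e_i,e_j)e_i+Je_j$; since the $2m-2\geq 4$ vectors $e_k$ are orthonormal, this is absurd. You instead extract from \eqref{R-2-ijkl} only the scalar constraint $(\star)$ on the skew-symmetric orthogonal matrix $\Omega=(\omega_{ij})$ and classify its possible supports: at most two nonzero entries per row, support graph a perfect matching, and then $(\star)$ applied across two blocks gives $2\varepsilon_1\varepsilon_2=0$. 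What your route buys is a structural statement (in dimension $\geq 6$ no skew-symmetric orthogonal $\Omega$ can satisfy $(\star)$ at all, and the obstruction is located exactly where $m\geq 3$ enters, consistent with the $n=4$ solutions that make Proposition \ref{prop-2} necessary); what it costs is length and the middle step you only sketch, namely passing from $u_kv_\ell+u_\ell v_k=0$ for all $k\neq\ell$ (plus unit rows) to the bound of two nonzero entries per row. That step does go through: if $u$ had three nonzero entries $u_p,u_q,u_r$, the relations force $v$ to vanish identically on the complement of $\{i,j\}$, hence $|\omega_{ij}|=1$ and then $u=0$, a contradiction; so this is a fillable sketch rather than a gap. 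One small inaccuracy: transposing $k$ and $\ell$ in $(\star)$ reproduces the same identity up to sign, so it adds nothing; the sharpened relations $\omega_{ij}\omega_{k\ell}=\omega_{i\ell}\omega_{jk}=-\omega_{ik}\omega_{j\ell}$ already follow from $(\star)$ together with its $j\leftrightarrow k$ transposition.
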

\begin{proof} 
  Suppose, for a contradiction, that $\mathbb{CP} ^m$ admits local orthogonal coordinates, i.e. that $g _{FS}$ is of the form (\ref{ortho}) for some local coordinates $x_1, \ldots, x_n$, $n = 2 m$,  on some open set $\mathcal{U}$, and consider the corresponding orthonormal frame $\{e_1, \ldots, e_n\}$ as in Proposition  \ref{prop-crux}. 
  
  Choose any pair $e_i, e_j$ such that $\omega (e_i, e _j) \neq 0$, and any $e _k$ orthogonal to $e_i$ and $e _j$.  In view of (\ref{FS}), with $c = 4$,  we have
  \begin{equation} R ^{FS} _{e_i, e_j} e _k = \omega (e_i, e_k) \, J e_j - \omega (e_j, e_k) \, Je_i + 2 \omega (e_i, e_j) \, J e_k, \end{equation}
  whereas, by (\ref{R-ijk}), we should have:
  \begin{equation} R ^{FS} _{e_i, e _j} e _k = f  _i  \, e_i - f  _j  \, e_j, \end{equation}
with $f  _i := a _i ^{-1} (\nabla _{e_j} d a _i) (e_k)$, $f  _j := a _j ^{-1} (\nabla _{e_i} d a _j) (e_k)$, so that:
\begin{equation} 2 \omega (e_i, e_j) \, e _k = - \omega (e_i, e_k) \, e_j + \omega (e_j, e_k) \, e_i - f  _i \, J e _i + f  _j \, J e_j. \end{equation}
Since $e_k$ is orthogonal to $e_i, e_j$, the functions $f  _i, f  _j$ are necessarily given by  $f  _i = - \frac{\omega (e_i, e_k)}{\omega (e_i, e_j)}$ and $f  _j = \frac{\omega (e_j, e_k)}{\omega (e_i, e_j)}$, whence
\begin{equation*} e _k = \frac{\omega (e _i, e _k)}{2 (\omega (e_i, e _j))^2} \, (- \omega (e_i, e _j) \, e_j + J e _i) + \frac{\omega (e _j, e _k)}{2 (\omega (e_i, e _j))^2} \, ( \omega (e_i, e _j) \, e_i + J e _j). \end{equation*}
Since $e _k$ may be any element in  the orthonormal frame $e_1, \ldots, e _n$ distinct from $e_i, e_j$, this means that the $(2 m - 2)$-dimensional space orthogonal  to the $2$-dimensional space generated by $e_i, e_j$ would be contained in the $2$-dimensional space generated by $- \omega (e_i, e _j) \, e_j + J e _i$ and  $\omega (e_i, e _j) \, e_i + J e _j$. This clearly cannot hold unless $m = 2$.
\end{proof}
We now show:
\begin{prop} \label{prop-2} The complex projective plane $\mathbb{CP} ^2$, equipped with the standard Fubini-Study metric, admits no orthogonal system of coordinates. \end{prop}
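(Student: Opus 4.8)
The plan is to assume, for a contradiction, that $g_{FS}$ has the diagonal form (\ref{ortho}) on some connected open set $\mathcal{U}\subset\mathbb{CP}^2$, with associated orthonormal frame $\{e_1,e_2,e_3,e_4\}$ and positive functions $a_1,\dots,a_4$ as in Proposition \ref{prop-crux}. Write $\omega_{ij}:=\omega(e_i,e_j)=g(Je_i,e_j)$, so that $Je_i=\sum_k\omega_{ik}e_k$ and $J^2=-\mathrm{Id}$. The $m=2$ dimension count of Proposition \ref{prop-m} leaves a $2$-dimensional complement, so no contradiction is visible there; instead the argument has an \emph{algebraic half}, which pins down all the $\omega_{ij}$ pointwise, and a \emph{differential half}, which uses the K\"ahler condition $\nabla\omega=0$ to produce the contradiction.

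\emph{Algebraic half.} First, evaluating the three curvature components $g(R_{e_1,e_2}e_3,e_4)$, $g(R_{e_1,e_3}e_2,e_4)$, $g(R_{e_1,e_4}e_2,e_3)$ by means of both (\ref{R-2-ijkl}) and (\ref{FS}) (with $c=4$) gives a linear system whose solution is $\omega_{12}\omega_{34}=\omega_{14}\omega_{23}=-\omega_{13}\omega_{24}$. Since $\mathbb{CP}^2$ is K\"ahler, $\omega\wedge\omega$ is a volume form, so $\omega_{12}\omega_{34}-\omega_{13}\omega_{24}+\omega_{14}\omega_{23}\neq 0$; combined with the previous relations this forces $\omega_{ij}\neq 0$ for all $i\neq j$. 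Next, for mutually distinct $i,j,k$, (\ref{FS}) gives $R_{e_i,e_j}e_k=\omega_{ik}Je_j-\omega_{jk}Je_i+2\omega_{ij}Je_k$, while by (\ref{R-ijk}) this vector lies in $\mathrm{span}(e_i,e_j)$; its $e_i$- and $e_j$-components are $-3\omega_{ij}\omega_{ik}$ and $-3\omega_{ij}\omega_{jk}$, so
\[ \omega_{ik}Je_j-\omega_{jk}Je_i+2\omega_{ij}Je_k=-3\omega_{ij}\omega_{ik}\,e_i-3\omega_{ij}\omega_{jk}\,e_j. \]
Applying $J$ to this identity and taking the scalar product with $e_j$ (which is orthogonal to $e_i$ and $e_k$) turns the left-hand side into $-\omega_{ik}$ and the right-hand side into $-3\omega_{ij}^2\omega_{ik}$; since $\omega_{ik}\neq 0$ we obtain $\omega_{ij}^2=\tfrac13$ for all $i\neq j$. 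In particular each $\omega_{ij}$ is locally constant, so, shrinking $\mathcal{U}$, we may assume all $\omega_{ij}$ are constant on $\mathcal{U}$.

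\emph{Differential half.} Since $g_{FS}$ is K\"ahler, $\nabla\omega=0$; differentiating the constant $\omega_{ij}=\omega(e_i,e_j)$ along $e_k$ (for $i,j,k$ mutually distinct) and using (\ref{nabla-e}) yields $\alpha_{ki}\omega_{kj}+\alpha_{kj}\omega_{ik}=0$, where $\alpha_{ki}:=a_k^{-1}da_k(e_i)$. As the $\omega_{ij}$ are nonzero, $\alpha_{ki}/\omega_{ki}$ is independent of $i\neq k$, so $\alpha_{ki}=\lambda_k\,\omega_{ki}$ for some functions $\lambda_k$ on $\mathcal{U}$, and (\ref{nabla-e}) becomes $\nabla_{e_i}e_j=\lambda_i(\omega_{ij}e_i-\delta_{ij}Je_i)$. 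Computing $(\nabla_{e_j}da_i)(e_k)$ for mutually distinct $i,j,k$ directly from this, using $e_j(a_i)=a_i\lambda_i\omega_{ij}$ and the constancy of $\omega_{ij},\omega_{ik}$, gives $(\nabla_{e_j}da_i)(e_k)=a_i\omega_{ik}\bigl(\lambda_i^2\omega_{ij}+e_j(\lambda_i)\bigr)-a_i\lambda_i\lambda_j\omega_{ij}\omega_{jk}$, while (\ref{R-ijkl}) and (\ref{FS}) give $(\nabla_{e_j}da_i)(e_k)=-3a_i\omega_{ij}\omega_{ik}$. Comparing and dividing by $a_i\omega_{ik}\neq 0$,
\[ e_j(\lambda_i)=-3\omega_{ij}-\lambda_i^2\,\omega_{ij}+\lambda_i\lambda_j\,\omega_{ij}\,\frac{\omega_{jk}}{\omega_{ik}}. \]
The left-hand side is independent of the third index $k$; but for the two admissible choices $k,\ell$ (i.e.\ $\{i,j,k,\ell\}=\{1,2,3,4\}$) one has $\omega_{jk}\omega_{i\ell}-\omega_{ik}\omega_{j\ell}=2\omega_{ij}\omega_{k\ell}\neq 0$ (again a consequence of (\ref{R-2-ijkl}) and (\ref{FS})), so the ratios $\omega_{jk}/\omega_{ik}$ and $\omega_{j\ell}/\omega_{i\ell}$ are distinct; hence $\lambda_i\lambda_j=0$ for all $i\neq j$. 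Thus at each point at most one $\lambda_i$ is nonzero, and consequently there is an index $i$ and a nonempty open subset $V\subset\mathcal{U}$ on which $\lambda_i\equiv 0$ (either all $\lambda_i$ vanish identically on $\mathcal{U}$, or some $\lambda_{i_0}$ is nonzero on an open set, on which the remaining $\lambda_j$ then all vanish). On $V$ we have $e_j(\lambda_i)=0$ for $j\neq i$, so the displayed identity forces $\omega_{ij}=0$ there, contradicting $\omega_{ij}^2=\tfrac13$.

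\emph{Main obstacle.} The algebraic constraints obtained above are, by themselves, consistent: there do exist skew-symmetric $4\times 4$ matrices with all off-diagonal entries $\pm\tfrac1{\sqrt3}$ whose square is $-I$. Hence the essential work is in the differential half, where one must bring in both the parallelism of $\omega$ and the fact that $\{e_i\}$ comes from a coordinate system. The two points I expect to be delicate are (i) the use of $J$ in the algebraic half, which is what upgrades the mere inclusion $R_{e_i,e_j}e_k\in\mathrm{span}(e_i,e_j)$ into the rigid value $\omega_{ij}^2=\tfrac13$, and (ii) the exploitation of the freedom in the choice of the third index $k$ to kill the products $\lambda_i\lambda_j$.
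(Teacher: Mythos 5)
Your proof is correct; I checked the computations (the product relations $\omega_{12}\omega_{34}=\omega_{14}\omega_{23}=-\omega_{13}\omega_{24}$, the identity $(\nabla_{e_j}da_i)(e_k)=a_i\omega_{ik}(\lambda_i^2\omega_{ij}+e_j(\lambda_i))-a_i\lambda_i\lambda_j\omega_{ij}\omega_{jk}$, and the comparison with $-3a_i\omega_{ij}\omega_{ik}$ all come out right), and it agrees with the paper at both ends while taking a genuinely different route in the middle. Like the paper, you first pin down $\big(\omega(e_i,e_j)\big)^2=\tfrac13$ from \eqref{R-2-ijkl} and \eqref{FS}: the paper uses self-duality of $\omega$ and the volume normalization, whereas you extract the product relations from three vanishing components and then apply $J$; your variant avoids the sign normalization \eqref{omega-e}--\eqref{J} altogether. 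Your relation $a_k^{-1}da_k(e_i)=\lambda_k\,\omega(e_k,e_i)$, obtained from $\nabla\omega=0$, is exactly the content of the paper's \eqref{a1-bis}--\eqref{a4-bis} (your $\lambda_k$ is the paper's $c_k$ up to a nonzero constant). The real divergence is how these coefficients are killed: the paper differentiates once more through Lie brackets of vector fields in $\ker da_1$, etc., getting the four alternatives \eqref{alt-1}--\eqref{alt-4} and an invertible-matrix argument to force some $c_i=0$; you instead compare the second derivative $(\nabla_{e_j}da_i)(e_k)$ computed from the frame structure equations with the value imposed by \eqref{R-ijkl} and \eqref{FS}, and exploit the freedom in the third index $k$ together with $\omega_{jk}\omega_{i\ell}-\omega_{ik}\omega_{j\ell}=2\omega_{ij}\omega_{k\ell}\neq 0$ to conclude $\lambda_i\lambda_j=0$, hence $\lambda_i\equiv 0$ for some $i$ on an open set. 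The terminal contradiction is then the same in substance as the paper's \eqref{F1} versus \eqref{F2}: once $da_i$ annihilates the $e_j$, $j\neq i$, the mixed curvature component $g(R_{e_i,e_j}e_k,e_i)$ must vanish, contradicting the Fubini--Study value $-3\omega(e_i,e_j)\omega(e_i,e_k)=\pm 1$. What your route buys is that it trades the bracket/linear-algebra step for a direct second-order curvature comparison, and it is a bit more explicit than the paper about the pointwise-versus-open-set issue when passing from $\lambda_i\lambda_j=0$ to the vanishing of some $\lambda_i$ on an open neighbourhood.
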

\begin{proof} Again, assume for a contradiction, that $\mathbb{CP} ^2$ admits local orthogonal coordinates $x _1, x_2, x_3, x_4$ on some open set $\mathcal{U}$ and denote by $e_1, e_2, e_4, e_4$ the corresponding orthonormal frame. As for any direct orthonormal frame  relative to  the orientation induced by the natural complex structure $J$ of $\mathbb{CP} ^2$, we have
\begin{equation} \omega (e_1, e_2) \omega (e_3, e_4) - \omega (e_1, e_3) \omega (e_2, e_4) + \omega (e_1, e_4) \omega (e_2, e_3) = 1, \end{equation}
since the volume form of $g _{FS}$ for the chosen orientation is $\frac{\omega \wedge \omega}{2}$. By (\ref{FS}), with $c = 4$, it follows that 
\begin{equation} \label{RFS} \begin{split} g_{FS} (R^{FS} _{e_1, e_2} e_3, e_4) & =
    \omega (e_1, e_3) \omega (e_2, e_4) - \omega (e_1, e _4) \omega (e_2, e _3)\\ &  + 2 \omega (e_1, e_2) \omega (e_3, e_4) \\ & = - 1 + 3 \omega (e_1, e_2) \omega (e_3, e _4) \\ & = - 1 + 3 \big(\omega (e _1, e_2)\big) ^2, \end{split} \end{equation}
as $\omega$ is self-dual.
In view of (\ref{R-2-ijkl}) in Proposition \ref{prop-crux} and of (\ref{RFS}), we have
\begin{equation} \label{omega-13} \big(\omega (e _i, e _j)\big)^2 = \frac{1}{3}, \end{equation}
for any $i, j$, $i \neq j$. Up to possibly  changing $J$ into $-J$, we may then arrange that
\begin{equation} \label{omega-e} \begin{split}  \omega & (e_1, e_2)  = \omega (e_3, e_4) = \omega (e_1, e_3)\\ &  = - \omega (e_2, e_4) = \omega (e_1, e_4) = \omega(e_2, e_3)
    = \frac{1}{\sqrt{3}}, \end{split} \end{equation}
i.e. that
\begin{equation} \label{J} \begin{split} & J e_1 = \frac{e_2 + e_3 + e_4}{\sqrt{3}}, \quad J e_2 = \frac{- e_1 + e_3 - e_4}{\sqrt{3}}, \\ & J e_3 =
    \frac{- e_1 - e_2 + e _4}{\sqrt{3}}, \quad J e _4 = \frac{- e_1 + e_2 - e _3}{\sqrt{3}}. \end{split} \end{equation}
By making explicit the identities $\nabla _{e _i} J e _1 = J \nabla _{e _i} e_1$, $i = 1, 2, 3, 4$, via (\ref{nabla-e}) and (\ref{J}) we easily get:
\begin{equation} \label{a1-bis} d a _1 (e_2) = d a _1 (e_3) = d a _1 (e_4), \end{equation}
\begin{equation} \label{a2-bis} d a _2 (e_1) = - d a _2 (e_3) = d a _2 (e_4), \end{equation}
\begin{equation} \label{a3-bis} d a _3 (e_1) = d a _3 (e_2) = - d a _3 (e_4), \end{equation}
\begin{equation} \label{a4-bis} d a _4 (e_1) = - d a _4 (e_2) = d a _4 (e_3).  \end{equation}
From  (\ref{a1-bis}) we infer that the vector fields  $e_2 - e_3$, and $e _2 - e_4$ both belong to the kernel of $d a _1$; it follows that their bracket $- [e_2, e_4] + [e_2, e_3] + [e_3, e_4]$, which, by (\ref{nabla-e}) is equal to $2 a _2 ^{-1} d a _2 (e_3) \, e_2 + 2 a _3 ^{-1} d a _3 (e_4) \, e _3 + 2 a _4 ^{-1} d a _4 (e_2) \, e _4$, also belongs to the kernel of $d a _1$, so that: $a _2 ^{-1} d a _2 (e_3) d a _1 (e_2) + a _3 ^{-1} d a _3 (e_4) d a _1 (e _3) +  a _4 ^{-1} d a _4 (e_2) d a _1 (e _4) = 0$.
By introducing the notation
\begin{equation} \label{notation} \begin{split} & c _1 := a _1 ^{-1} d a _1 (e _2) = a _1 ^{-1} d a _1 (e _3) = a _1 ^{-1} d a _1 (e _4), \\ &
    c _2 := a _2 ^{-1} d a _2 (e _1) =  - a _2 ^{-1} d a _2 (e _3) = a _2 ^{-1} d a _2 (e _4), \\ & c _3 := a _3 ^{-1} d a _3 (e _1) =   a _3 ^{-1} d a _3 (e _2) =   - a _3 ^{-1} d a _3 (e _4), \\ & c _4 := a _4 ^{-1} d a _4 (e _1) = -  a _4 ^{-1} d a _4 (e _2) =   a _4 ^{-1} d a _4 (e _3), \end{split} \end{equation}
and by using (\ref{a1-bis}) again, this can be rewritten as $(c_2 + c _3 + c _4) \, c _1 = 0$. We thus get the following alternative:
\begin{equation} \label{alt-1} \text{either} \quad c _2 + c _3 + c _4 = 0 \quad \text{or} \quad c _1 = 0. \end{equation}
By considering (\ref{a2-bis}), (\ref{a3-bis}) and (\ref{a4-bis}), we similarly  obtain the following three alternatives:
\begin{equation} \label{alt-2} \text{either} \quad c _1 + c _3 - c _4 = 0 \quad \text{or} \quad c _2 = 0, \end{equation}
\begin{equation} \label{alt-3} \text{either} \quad  c _1  - c _2 + c _4 = 0 \quad \text{or} \quad c _3 = 0, \end{equation}
\begin{equation} \label{alt-4} \text{either} \quad c _1 + c _2 - c _3 = 0 \quad \text{or} \quad c _4 = 0.  \end{equation}
Since the matrix  $\begin{pmatrix} 0 & 1 & 1 & 1 \\ 1 & 0 & 1 & - 1 \\ 1 & - 1 & 0 & 1\\ 1 & 1 & - 1 & 0 \end{pmatrix}$ is invertible, the  left hand sides of (\ref{alt-1}), (\ref{alt-2}), (\ref{alt-3}), (\ref{alt-4}) cannot be all equal to zero, unless all $c _i$ are zero, which would imply that each $a_j$ is  a  function of $x_j$ only, hence that  the Fubin-Study metric $g _{FS}$ is flat. It then follows that $c _i = 0$, for some $i$. As just observed, this implies that $a_i$ is a function of $x_i$ only, and we can then consider that $a _i$ is constant. By (\ref{R-ijk}), this implies that $R ^{FS} _{e _i, e _j} e _k = - a _j ^{-1} (\nabla _{e _i} d a _j) (e _k) e _j$ for any $j \neq  k$, both  distinct from $i$; in particular, we then have:
\begin{equation} \label{F1} g _{FS} (R ^{FS}_{e_i, e_j} e _k, e _i) = 0. \end{equation}
On the other hand, by (\ref{FS}), with $c = 4$, we have that
\begin{equation} R ^{FS} _{e_i, e_j} e _k = \omega(e_i, e_k) \, J e_j - \omega (e_j, e_k) \, Je_i + 2 \omega (e_i, e_j) \, J e _k, \end{equation}
hence $g _{FS} (R ^{FS}_{e_i, e_j} e _k, e _i) = - 3 \omega (e_i, e_j) \, \omega (e_i, e _k)$; from  (\ref{omega-13}), we then infer:
\begin{equation} \label{F2} g _{FS} (R ^{FS}_{e_i, e_j} e _k, e _i) = \pm 1, \end{equation}
which evidently contradicts (\ref{F1}). 
\end{proof}

\section{The quaternionic projective space}

In this section, we consider the quaternionic projective space $\mathbb{H P} ^q$, $q \geq 2$, equipped with its standard quaternionic K\"ahler structure, determined by the Riemannian metric $g$ and a rank $3$ subbundle, $Q$,  of the bundle of skew-symmetric endomorphisms of $T \mathbb{H P} ^q$, preserved by the Levi-Civita of $g$ and locally generated by triplets of almost complex structures, $J _1, J _2, J_3$, such that $J_1 J_2 J_3 = - {\rm Id}$. For any such triplet, we set $\omega _{\alpha} := g (J _{\alpha} \cdot, \cdot)$, $\alpha = 1, 2, 3$. 

If $q= 1$, $\mathbb{HP} ^1$ is isometric, up to scaling, to the standard round sphere $\mathbb{S} ^4$ and therefore  does admit orthogonal coordinates, cf. Example \ref{ex-sphere}. We have however:
\begin{prop} \label{prop-HPk} For $q \geq 2$, the quaternionic projective space $\mathbb{HP} ^q$ admits no local orthogonal coordinates.
\end{prop}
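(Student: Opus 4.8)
The plan is to mimic the strategy that worked for $\mathbb{CP}^m$ with $m\ge 3$, exploiting the vanishing condition \eqref{R-2-ijkl} together with the explicit curvature of the quaternionic projective space. First I would recall the curvature tensor of $\mathbb{HP}^q$ with its canonical metric (suitably normalized), which has the form
\begin{equation*}
\begin{split}
R_{X,Y}Z = \tfrac{\kappa}{4}\Big(&g(X,Z)Y - g(Y,Z)X + \sum_{\alpha=1}^3\big(\omega_\alpha(X,Z)J_\alpha Y - \omega_\alpha(Y,Z)J_\alpha X + 2\omega_\alpha(X,Y)J_\alpha Z\big)\Big),
\end{split}
\end{equation*}
for a triple $J_1,J_2,J_3$ locally trivializing $Q$. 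Assuming for contradiction that orthogonal coordinates exist on some $\mathcal{U}$, I would introduce the associated orthonormal frame $\{e_1,\dots,e_n\}$, $n=4q$, as in Proposition \ref{prop-crux}, so that \eqref{R-ijk} and \eqref{R-2-ijkl} both hold.

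The key step is to extract information from \eqref{R-ijkl} applied to well-chosen quadruples. Pick a vector $e_i$ in the frame, and consider the three vectors $J_1 e_i, J_2 e_i, J_3 e_i$, which together with $e_i$ span a quaternionic line $\mathbb{H}e_i$. For any $e_k$ in the frame orthogonal to this line, and any other frame vector $e_j$ also orthogonal to it with $e_j\ne e_k$, the curvature term $R_{e_i,e_j}e_k$ has, by the formula above, a component only in directions $J_\alpha e_i$ (since $g(e_i,e_k)=\omega_\alpha(e_i,e_k)=0$ and $\omega_\alpha(e_j,e_k)$, $\omega_\alpha(e_i,e_j)$ control the $J_\alpha e_j$ and $J_\alpha e_k$ parts); meanwhile \eqref{R-ijk} forces $R_{e_i,e_j}e_k$ to lie in $\mathrm{span}(e_i,e_j)$. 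Matching these, as in the proof of Proposition \ref{prop-m}, yields a strong constraint: essentially the orthogonal complement of $\mathbb{H}e_i$ would have to be contained in a space of dimension at most $3$ (spanned by combinations of $J_\alpha e_i$ and one extra direction), which is impossible once $n-4 = 4q-4 > 3$, i.e.\ for $q\ge 2$. I would need to be slightly careful that a frame vector $e_j$ orthogonal to $\mathbb{H}e_i$ and distinct from $e_k$ actually exists and that the $\omega_\alpha(e_i,e_j)$ are not all forced to vanish, but a dimension count ($4q-4\ge 4$) guarantees enough room, and one can always choose $e_j$ so that $(e_i,e_j)$ is not a totally real pair.

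The main obstacle I anticipate is that, unlike the Kähler case where a single $\omega$ governs everything, here three forms $\omega_1,\omega_2,\omega_3$ appear and the local trivialization $J_1,J_2,J_3$ of $Q$ is only defined up to an $SO(3)$-rotation, so the ``$\omega(e_i,e_j)\ne 0$'' choice must be made $Q$-invariantly — e.g.\ by selecting $e_i,e_j$ so that the projection of $e_j$ onto $\mathbb{H}e_i$ is nonzero, equivalently $Q_{e_i}e_j \ne 0$ where $Q_{e_i}e_j$ denotes the component in $\mathrm{span}(J_1e_i,J_2e_i,J_3e_i)$. Once the argument is set up frame-vector by frame-vector with the quaternionic line $\mathbb{H}e_i$ playing the role that the complex line $\mathbb{C}e_i$ played before, the contradiction follows by the same linear-algebra/dimension argument, with the threshold now at $4q-4>3$, hence $q\ge 2$; the borderline low-dimensional case that needed special treatment for $\mathbb{CP}^2$ does not arise here because $\mathbb{HP}^1\cong\mathbb{S}^4$ is already excluded from the statement. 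I would finish by noting that this handles all $q\ge 2$ uniformly, so no separate argument (in the spirit of Proposition \ref{prop-2}) is required.
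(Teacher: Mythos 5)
Your central step does not go through: you need frame vectors $e_j,e_k$ that are orthogonal to the quaternionic line $\mathbb{H}e_i={\rm span}(e_i,J_1e_i,J_2e_i,J_3e_i)$, but the orthonormal frame is the one dictated by the putative orthogonal coordinates and cannot be adapted to the quaternionic structure. Being a frame vector distinct from $e_i$ only gives $g(e_i,e_j)=0$; it gives no control on $\omega_\alpha(e_i,e_j)=g(J_\alpha e_i,e_j)$, and generically \emph{no} frame vector lies in $(\mathbb{H}e_i)^\perp$, so the dimension count $4q-4\ge 4$ (which concerns the subspace $(\mathbb{H}e_i)^\perp$, not the frame) does not provide the vectors you need. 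The proposal is moreover internally inconsistent: to reduce $R_{e_i,e_j}e_k$ to a combination of the $J_\alpha e_i$ you must have $\omega_\alpha(e_i,e_j)=0$ for all $\alpha$ (i.e.\ $(e_i,e_j)$ totally real), while at the end you insist on choosing $e_j$ so that $(e_i,e_j)$ is \emph{not} totally real; these two requirements exclude each other. Finally, the asserted conclusion that $(\mathbb{H}e_i)^\perp$ would have to sit inside a space of dimension at most $3$ is never derived.

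Even the honest transposition of the proof of Proposition \ref{prop-m} — fix $e_i,e_j$ with $A:=\sum_\alpha\omega_\alpha(e_i,e_j)J_\alpha\neq 0$, let $e_k$ run over the remaining frame vectors, and compare the curvature formula with \eqref{R-ijk} — only yields that the $4q-2$ vectors $e_k$ lie in $A^{-1}\bigl({\rm span}(e_i,e_j,J_\alpha e_i,J_\alpha e_j)\bigr)$, a space of dimension at most $8$; this contradicts linear independence for $q\ge 3$ but not for $q=2$, so your claim that no borderline case arises is unjustified — a separate argument (in the spirit of Proposition \ref{prop-2}) would still be needed for $\mathbb{HP}^2$. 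The paper avoids both problems by a different mechanism: it uses the four-distinct-index vanishing \eqref{R-2-ijkl} for the curvature written as in \eqref{R-H}, symmetrizes over circular permutations of $i,j,k$ to obtain the containment \eqref{a-compare}, and then applies Lemma \ref{lemma-easy} with $V={\rm span}(e_i,e_k,J_1e_i,J_2e_i,J_3e_i)$ to conclude $V+JV=T\mathbb{HP}^q$, impossible since $\dim(V+JV)\le 6<8\le 4q$. If you want a proof along your lines, you must either reproduce that symmetrization-plus-lemma argument or supply the missing treatment of $q=2$; as written, the key selection of adapted frame vectors is not available.
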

\begin{proof} Up to scaling, the curvature, ${\rm R}$, of $\mathbb{HP} ^q$, viewed as a symmetric  endomorphism of $\Lambda ^2 T \mathbb{HP} ^q$,  is locally given by:
  \begin{equation} \label{R-H} {\rm R} (X \wedge Y) = X \wedge Y + \sum _{\alpha = 1} ^3 J _{\alpha} X \wedge J _{\alpha} Y + 2 \sum _{\alpha = 1} ^3 \omega _{\alpha} (X, Y) \, \omega _{\alpha} ^{\sharp _g}, \end{equation}
  for any vector fields $X, Y$, where $\omega _{\alpha} ^{\sharp _g}$ denotes the section of $\Lambda ^2 T \mathbb{HP} ^q$ determined by  $\omega _{\alpha}$ by Riemannian duality.
  
  Assume for a contradiction, that $\mathbb{HP} ^q$ admits  an orthogonal system of coordinates, $\{x_1, \ldots, x _{4q}\}$,  on some connected open set $\mathcal{U}$ where $Q$ is trivialized by a triplet $J_1, J_2, J_3$ as above, where ${\rm R}$ is then given by (\ref{R-H}), and denote by $\{e_1, \ldots, e _{4 q}\}$ the corresponding orthonormal frame, as defined by (\ref{e}). For convenience, we introduce the notation:
  \begin{equation} \label{a} a _{i j k \ell} := \sum _{\alpha = 1} ^3 \omega _{\alpha} (e_i, e_j) \omega _{\alpha} (e_k, e _{\ell}). \end{equation}
  From (\ref{R-2-ijkl}) and  (\ref{R-H}), we should have
  \begin{equation} 0 = g ({\rm R} (e _i \wedge e _j), e _k \wedge e _{\ell}) = a _{i k j \ell} + a _{k j i \ell} + 2 a _{i j k \ell}, \end{equation}
  for any pairwise distinct $4$-uplets $i, j, k,  \ell$. For any such $4$-uplet, we then infer $ a _{i k j \ell} + a _{k j i \ell} + a _{j i k \ell} = 3 a _{j i k \ell}$.
  Since the left hand side of this identity is invariant by circular permutation of $i, k, j$, we thus obtain:
  \begin{equation}  a _{i k j \ell} = a _{k j i \ell} = a _{j i k \ell}, \end{equation}
  for any pairwise distinct $4$-uplets $i, j, k, \ell$. From the first
  equality in (\ref{a-compare}), we infer that $\sum _{\alpha = 1} ^3 \omega _{\alpha} (e_i, e_k) J _{\alpha} e_j + \sum _{\alpha = 1} ^3 \omega _{\alpha} (e_j, e_k) J _{\alpha} e_i$ is orthogonal to $\ell$, for any $\ell$ distinct from $i, j, k$, so that 
  \begin{equation}\label{a-compare} \sum _{\alpha = 1} ^3 \omega _{\alpha} (e_i, e_k) J _{\alpha} e_j + \sum _{\alpha = 1} ^3 \omega _{\alpha} (e_j, e_k) J _{\alpha} e_i \in {\rm span} (e_i, e _j, e _k), \end{equation}
  for any pairwise distinct triplets $i, j, k$. 
  
  We now fix $i, k$ such that $\omega _1 (e_i, e _k) \neq 0$ (for any fixed $i$, we can obviously chose such a $k$). Denote $b _{\alpha} := \omega _{\alpha} (e_i, e_k)$. Then $b _1 \neq 0$ and the endomorphism
  $${J} := \frac{b_1J_1+b_2J_2+b_3J_3}{\sqrt{b_1^2+b_2^2+b_3^2}}$$ is a well-defined section of $Q$ on $\mathcal{U}$. From (\ref{a-compare}), we get:
  \begin{equation} \label{barJ-1} {J} e _j \in {\rm span} (e_i, e_j, e _k, J_1 e_i, J_2 e_i, J _3 e _i), \end{equation}
  for any $j$  distinct from $i, k$. 
  
  At this point of the argument, we use the following easy general fact:
  \begin{lemma} \label{lemma-easy} Let $(E, J)$ be a complex vector space of any dimension, $V$ a {\em real} subspace of $E$ and $v$ an element of $E$ such that $Jv$ belongs to $\mathbb{R} \, v + V$. Then, $v$ belongs to $V + JV$. 
  \end{lemma}
\begin{proof}By hypothesis, $J v = a \, v + w$, for some real number $a$ and some element $w$ of $V$. If $a = 0$, then $v = - J w$ belongs to $JV$. If $a \neq 0$, then $v = a ^{-1} Jv - a ^{-1} w$, hence $J v = - a ^{-1} v - a ^{-1} J w $. Since we also have $J v = a \, v + w$, it follows that $(a + a ^{-1}) v = - w - a ^{-1} Jw$. As $(a + a ^{-1}) \neq 0$, we infer that $v = - (a + a ^{-1}) ^{-1} \, (w + a ^{-1} w)$ belongs to $V + JV$. This concludes the proof of the lemma.
\end{proof}

  By using Lemma \ref{lemma-easy} for $V:={\rm span} (e_i, e _k, J_1 e_i, J_2 e_i, J _3 e _i)$, we readily infer from (\ref{barJ-1}) that
  $e_j$ belongs to $V + {J} V$, for any $j$ distinct from $i, k$, so actually for any $j$, as $e_i$ and $e _k$ already belong to $V$. We would then  eventually get:\begin{equation} V + {J} V = T \mathbb{HP} ^q. \end{equation}
  On the other hand, $V + J V$ is  generated by $e _i, e _k, J _1 e_i, J _2 e _i, J _3 e _i, {J}e _k$, hence is of dimension at most equal to $6$, whereas the dimension of $T \mathbb{HP} ^q$, is equal to $4 q \geq 8$. This contradiction completes the proof of Proposition \ref{prop-HPk}.
  \end{proof} 

\section{Open questions}

While writing these notes, we have encountered several natural questions about metrics admitting orthogonal coordinates whose answers are unknown to us. We list some of them below:

-- Is there any topological obstruction for the existence of metrics with orthogonal coordinates, or does every smooth manifold carry such metrics? 

-- A Riemannian product of Riemannian manifolds with orthogonal coordinates also has orthogonal coordinates. Conversely, if a Riemannian product has orthogonal coordinates, does this hold for the two factors?

-- For a given Riemannian metric, can one find obstructions (in terms of the curvature tensor) to the existence of orthogonal coordinates, other than those given by \eqref{R-2-ijkl}? Note that the Fubini-Study metric on  $\mathbb{C P} ^2$ carries local orthonormal frames satisfying \eqref{R-2-ijkl}, but no orthogonal coordinates (by Proposition \ref{prop-2}).

-- Is every locally symmetric space carrying orthogonal coordinates locally conformally flat? The results in this paper constitute some evidence in favor of a positive answer to this question.

\bigskip

 \end{document}